\documentclass[a4paper,10pt,leqno]{article}

\RequirePackage[T1]{fontenc}
\RequirePackage[utf8]{inputenc}
\RequirePackage{xcolor}
	\definecolor{sapphire}{HTML}{0F52BA}
\RequirePackage{amsfonts,amssymb,amsmath}
\RequirePackage{amsthm}
\RequirePackage{comment,lmodern,mathtools,stmaryrd,microtype,tikz-cd}

\RequirePackage{hyperref}
	\hypersetup{colorlinks,allcolors=sapphire}
\usepackage[capitalize]{cleveref}
	
\numberwithin{equation}{section}
\theoremstyle{plain}
	\newtheorem{thm}{Theorem}
	
	\newtheorem{cor}[thm]{Corollary}
	
	\newtheorem{prop}[thm]{Proposition}
\theoremstyle{definition}
	\newtheorem{ex}[thm]{Example}
\theoremstyle{remark}	
	\newtheorem{rem}[thm]{Remark}

\usepackage{autonum}

\newcommand{\bb}{\mathbf}

\newcommand{\fr}{\mathfrak}
\renewcommand{\sf}{\mathsf}

\newcommand{\sapphire}[1]{\textcolor{sapphire}{#1}}
	\newcommand{\dfemph}[1]{\sapphire{\emph{#1}}}

\DeclareMathOperator{\tr}{tr}

\newcommand{\GL}{\mathrm{GL}}
	\newcommand{\SU}{\mathrm{SU}}
	\newcommand{\SO}{\mathrm{SO}}
	\newcommand{\UL}{\mathrm{U}}

\newcommand{\mat}[1]{\begin{matrix}#1\end{matrix}}
\newcommand{\pmat}[1]{\left(\mat{#1}\right)}

\RequirePackage{authblk}

\newcommand{\mytitle}{Free Immersions in Critical Dimension from Lie Groups}
\title{\texorpdfstring{\vspace{-0.5in}\large\bfseries\mytitle}{\mytitle}}
\author{Minh-T\^{a}m Quang Trinh}
\affil{Department of Mathematics \\ Howard University\footnote{\url{minhtam.trinh@howard.edu}}}
\date{September 22, 2026}

\begin{document}

\maketitle

\begin{abstract}
We construct free immersions of $P^3 \times S^1$ and $S^3 \times S^1$ into $14$-space.
These are the first examples of free immersions in critical dimension of closed manifolds that are not spheres, projective spaces, tori, or surfaces.
The idea is to mimic De Leo's construction for $m$-tori with $m \leq 5$.
De Leo constructs a map that is symmetric with respect to a representation of the Lie group formed by the $(m - 1)$-torus; we replace this group with $\mathrm{SO}(3)$.
In our examples, the osculating determinant is constant.
\end{abstract}

\thispagestyle{empty}



\setcounter{section}{-1}
\section{Introduction}

\subsection{Background}

Fix integers $m, q > 0$ and a smooth manifold $M$ of dimension $m$.
Recall that an \dfemph{immersion} of $M$ into $\bb{R}^q$ is a $C^1$ map $f = (f^1, \ldots, f^q) \colon M \to \bb{R}^q$ for which the Jacobian matrix $(\partial_\alpha f^i)_{\alpha, i}$ has maximal rank $m$ everywhere.
Following~\cite[\S{1.1.4}]{gromov}, we say that $f$ is \dfemph{free} if, more strongly, it is $C^2$ and the osculating matrix 
\begin{align} 
(\partial_\alpha f^i, \partial_{\alpha\beta} f^i)_{(\alpha, i), (\alpha\beta, i)}
\end{align} 
has maximal rank 
\begin{align}
q_m \vcentcolon= m + \binom{m + 1}{2} = \frac{m(m + 3)}{2}
\end{align}
everywhere.
Since this forces $q \geq q_m$, we say that $q_m$ is the \dfemph{critical dimension} for free immersions of $m$-manifolds.

As defined above, freeness is not an intrinsic property:
The rank of the osculating matrix is sensitive to changes of local coordinates on $\bb{R}^q$.
However, it is preserved by any affine transformation of $\bb{R}^q$.
More generally, there is an intrinsic notion of freeness for immersions into any smooth manifold equipped with an affine connection.

Beyond~\cite{gromov}, one motivation for the study of free immersions is their appearance in Nash's work on embeddings of Riemannian manifolds.
See the metric perturbation theorem in~\cite{nash}. 

Eliashberg and Gromov showed that if either $q > q_m$ or $M$ is open, then the partial differential relation of being a free immersion of $M$ into $\bb{R}^q$ satisfies the $h$-principle~\cite{eg}.
So in this case, such free immersions exist whenever $M$ is parallelizable.
This focuses our attention on the existence of free immersions when $q = q_m$ and $M$ is closed and connected.

For the real projective $m$-space $P^m$, there is a straightforward construction of a free immersion into $\bb{R}^{q_m}$, by showing that the Veronese embedding $P^m \to P^{q_m}$ factors through an embedded $\bb{R}^{q_m}$.
Thus, there is also a free immersion of the $m$-sphere $S^m$ into $\bb{R}^{q_m}$ via the quotient map $S^m \to P^m$.

Perhaps the next simplest example of a closed connected manifold that exists in every dimension $m$ is the $m$-torus $(S^1)^m$.
In the breakthrough work~\cite{deleo}, De Leo constructs an explicit free immersion
\begin{align} 
F_m \colon (S^1)^m \to \bb{R}^{q_m}
	\quad\text{for $m = 2, 3, 4, 5$}.
\end{align} 
From the cases of $S^2$, $P^2$, and $(S^1)^2$, he deduces a free immersion of any closed surface into $\bb{R}^{q_2} = \bb{R}^5$ by a gluing argument. 

The key idea in~\cite{deleo} is to choose $F_m$ to be symmetric with respect to a representation of the (real) Lie group $\UL(1)^{m - 1}$ on $\bb{R}^q$.
More precisely, once we write the torus coordinates as 
\begin{align} 
(\vec{x}, z) \vcentcolon= (x_1, \ldots, x_{m - 1}, z) \in (S^1)^{m - 1} \times S^1 = (S^1)^m,
\end{align} 
and identify $S^1$ with $\UL(1)$, De Leo's $F_m$ takes the form
\begin{align}
F_m(\vec{x}, z) = R_m(\vec{x}) \cdot v_m(z)
\end{align}
for some homomorphism $R_m \colon \UL(1)^{m - 1} \to \SO(q_m)$ and map $v_m \colon S^1 \to \bb{R}^{q_m}$.
He then reduces the problem to choosing $v_m$.

After we posted the first version of this note to arXiv, we learned from De Leo that he had posted~\cite{deleo_2} a few days earlier.
This sequel to~\cite{deleo} presents a construction whose input consists of free immersions $(S^1)^a \to \bb{R}^{q_a}$ and $(S^1)^b \to \bb{R}^{q_b}$ along with a ``cross-free'' map $(S^1)^{a + b} \to \bb{R}^{ab}$, whose output is a free immersion $(S^1)^{a + b} \to \bb{R}^{q_{a + b}}$.
As a corollary, De Leo obtains an explicit free immersion $(S^1)^m \to \bb{R}^{q_m}$ for every dimension $m$.

\subsection{Results}

The purpose of this note is to show that the method of~\cite{deleo} applies beyond tori.
In our view, the algebraic structure that makes the homomorphism $R_m$ useful is not just symmetry, but the simple behavior of $R_m$ under differentiation.
This behavior, in turn, is an instance of a general fact about how representations of Lie groups transform under invariant vector fields:
See \eqref{eq:derivative} in \S\ref{subsec:derivatives}.

The smallest compact Lie groups beyond $\UL(1)$ are the rotation group $\SO(3)$ and its universal cover, the special unitary group $\SU(2)$.
There is a close analogy between $\UL(1)$, viewed as the unit circle in the complex plane, and $\SU(2)$, viewed as the unit $3$-sphere in the $4$-space of quaternions.
This leads us to hope for analogues of the free immersions $F_m$ in which $\UL(1)^{m - 1}$ is replaced by $\SO(3)$ or $\SU(2)$.
As these groups are respectively diffeomorphic to $P^3$ and $S^3$, we are proposing to construct free immersions of $P^3 \times S^1$ and $S^3 \times S^1$ into $\bb{R}^{q_4} = \bb{R}^{14}$.
These are our main results.

\begin{thm}\label{thm:main}
There is a free immersion $F \colon P^3 \times S^1 \to \bb{R}^{14}$ of the form
\begin{align}
F(g, z) = R(g) \cdot v(z),
\end{align}
for some homomorphism $R \colon \SO(3) \to \GL_{14}(\bb{R})$ and smooth map $v \colon S^1 \to \bb{R}^{14}$, once we identify $P^3$ with $\SO(3)$.

In fact, there is a five-parameter family of such maps $F$.
For any fixed choice of parameters, the determinant of the osculating matrix is constant in $(g, z) \in \SO(3) \times S^1$.
\end{thm}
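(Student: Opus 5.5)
Use left-invariance to reduce the problem at all points to a problem at the identity. Fix a basis $X_1, X_2, X_3$ of $\fr{so}(3)$ and let $\rho = dR \colon \fr{so}(3) \to \fr{gl}_{14}(\bb{R})$. For the left-invariant vector fields $\tilde X_a$ on $\SO(3)$ we have $\tilde X_a (R(g)) = R(g)\rho(X_a)$, which is the instance of \eqref{eq:derivative}. The fields $\tilde X_1, \tilde X_2, \tilde X_3, \partial_z$ form a global frame on $\SO(3) \times S^1$. Because freeness is invariant under changing frame on the source (the span of first and second derivatives does not change), we may compute the osculating matrix in this frame. Writing $A_a = \rho(X_a)$, the fourteen columns become $R(g)$ applied to the vectors
\begin{align}
A_a v,\quad v',\quad A_aA_b v \ (a \le b),\quad A_a v',\quad v'',
\end{align}
all evaluated at $z$. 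Products $A_aA_b$ and $A_bA_a$ differ by $A_{[a,b]}$, which lies in the first-order span, so only symmetrized products matter. Since $R(g) \in \SO(14)$ after averaging an inner product, its determinant is $1$. Therefore the osculating determinant equals $D(z) = \det(\ldots)$, which is independent of $g$. It remains to choose $\rho$ and $v$ so that $D(z)$ is a nonzero constant.

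Choose $R$ as a sum of real irreducibles of $\SO(3)$ of odd dimensions $1, 3, 5, 7, \ldots$ with total dimension $14$. One natural choice is $V_2 \oplus V_2 \oplus V_1 \oplus V_0$ ($5+5+3+1$); another is $V_3 \oplus V_2 \oplus V_0 \oplus V_0$. Then take
\begin{align}
v(z) = \sum_j \big(\cos(k_j\theta)\, u_j + \sin(k_j\theta)\, w_j\big), \qquad z = e^{i\theta},
\end{align}
using integer frequencies $k_j$ and fixed vectors $u_j, w_j$ in the isotypic pieces. This mimics De Leo's choice. Also combine the $\SO(3)$ action with a rotation in $z$ inside the multiplicity spaces, meaning a pair of copies of $V_\ell$ rotated by $e^{ik\theta}$. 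With this choice, $v(z) = e^{k\theta J} v_0$ for some $J$ commuting with $\rho$. Then $z$-translation is also a symmetry, $D(z)$ is automatically constant, and $D = \det(\text{column vectors at } z=1)$.

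The main obstacle is the explicit verification that this constant is nonzero. Concretely, we must choose an $\SO(3)\times\UL(1)$-module structure on $\bb{R}^{14}$ and a vector $v_0$ for which
\begin{align}
v_0,\ A_a v_0,\ Jv_0,\ A_{(a}A_{b)}v_0,\ A_aJv_0,\ J^2 v_0
\end{align}
span $\bb{R}^{14}$. Here $J$ acts on the multiplicity spaces with chosen frequencies, and $v_0$ itself is not a column. As a first attempt, count against the decomposition of the osculating space. The first-order part is $\fr{so}(3)\oplus\bb{R} = V_1\oplus V_0$. The second-order part is $\mathrm{Sym}^2(V_1\oplus V_0) = V_2\oplus V_0\oplus V_1\oplus V_0$. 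This suggests the target $V_2\oplus V_1\oplus V_1\oplus V_0\oplus V_0\oplus V_0$, of dimension $14$. The equivariant structure then makes the osculating map at the identity a $\mathrm{Stab}$-equivariant linear map. Its determinant factors into small blocks computable by hand. Finally, we record the free parameters (weights $u_j, w_j$ and frequencies) for which each block determinant is nonzero, which should yield the five-parameter family.
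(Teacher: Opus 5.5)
Your reduction is correct and is exactly \Cref{prop:partials}. In the left-invariant frame the osculating determinant is $D(z)=\det(v',v'',X_\alpha v,X_\alpha v',X_\alpha X_\beta v)$, which is independent of $g$. But that is the easy half. The theorem is an existence statement whose whole content is a specific $R$ and $v$ for which $D$ is a nonzero constant, together with the five free parameters. You call this ``the main obstacle'' and then never fix a choice or evaluate anything, so nothing is proved. The paper takes $R=\rho_2\oplus\rho_3\oplus\rho_0^2$ and lets $v$ run over:
\begin{itemize}
\item[(i)] a circle of radius $\alpha$ about $a\sf{v}_0^2$ in the weight-$2$ plane $\langle\sf{u}_2^2,\sf{v}_2^2\rangle$ of $V_2$;
\item[(ii)] a circle of radius $\beta$ about $b\sf{v}_0^3$ in the weight-$1$ plane $\langle\sf{u}_1^3,\sf{v}_1^3\rangle$ of $V_3$;
\item[(iii)] the circle $(\gamma\cos z,\gamma\sin z)$ in $V_0^2$.
\end{itemize}
A direct $14\times 14$ computation then gives $D=-\tfrac{15^4}{2}ab\alpha^4\beta^6\gamma^2$.

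The concrete suggestions you do make fail.

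\emph{Your $J$-rotation scheme.} Every real irrep of $\SO(3)$ is of real type, so an operator $J$ commuting with $dR$ acts on each isotypic part $V_\ell\otimes M_\ell$ through $\fr{gl}(M_\ell)$. If $\dim M_\ell=1$, then $J$ is a scalar there, and periodicity of $e^{k\theta J}v_0$ freezes the $V_\ell$-component of $v$. Hence:
\begin{itemize}
\item[(i)] For $V_3\oplus V_2\oplus V_0^2$, $J$ acts only on $V_0^2$. So $v'$ is invariant and $X_\alpha v'=0$.
\item[(ii)] For $V_2^{2}\oplus V_1\oplus V_0$, the $V_0$-component of $v$ is constant. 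So no column has a $V_0$-component.
\item[(iii)] The Casimir $\sum_\alpha X_\alpha^2=-\ell(\ell+1)$ on $V_\ell$ also commutes with $J$. This kills, for example, $V_3^{2}$, where $\sum_\alpha X_\alpha^2 v=(12/k^2)\,v''$.
\end{itemize}

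\emph{Your counting target.} The decomposition $V_2\oplus V_1^{2}\oplus V_0^{3}$ is excluded by \Cref{cor:dim-2}, since only $v'$ and $v''$ can project nontrivially onto the invariants. The heuristic behind it is also unfounded. The osculating map is equivariant only for the common stabilizer of $v,v',v''$. Freeness forces that stabilizer to be finite, since a one-parameter subgroup fixing $v$ kills some $X_\alpha v$. For generic $v$ it is trivial, so no block factorization comes for free.

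Note too that the paper's $v$ is not of your form. Its $V_2$- and $V_3$-components move even though they have multiplicity one, and the constancy of $D$ there comes out of the computation, not from a symmetry. To rescue your symmetric scheme you would need a decomposition with a repeated nontrivial irrep carrying the rotation, such as $V_1^{3}\oplus V_2$, and you would still have to carry out the determinant computation for it.
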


The constancy of the osculating determinant is somewhat remarkable.
De Leo has pointed out to us that in general, a free map of the form $F = R \cdot v$ can always be made to have constant osculating determinant by a reparametrization of $v$.
However, this usually makes the formula for $v$ more complicated, whereas our $v$ turns out to be very clean.

\begin{cor}\label{cor:main}
The analogue of \Cref{thm:main} holds with $S^3$ and $\SU(2)$ in place of $P^3$ and $\SO(3)$.
\end{cor}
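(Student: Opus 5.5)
The corollary should follow directly from \Cref{thm:main} by composing with the covering map. Let $\pi \colon \SU(2) \to \SO(3)$ be the standard double cover, the adjoint representation of $\SU(2)$ on its Lie algebra, viewed as the unit quaternions acting by conjugation on the imaginary quaternions. Under the identifications $\SU(2) \cong S^3$ and $\SO(3) \cong P^3$, the map $\pi$ is the quotient map $S^3 \to P^3$. Given the map $F(g, z) = R(g) \cdot v(z)$ from \Cref{thm:main}, I would set
\begin{align}
\tilde F(h, z) \vcentcolon= F(\pi(h), z) = (R \circ \pi)(h) \cdot v(z), \qquad (h, z) \in \SU(2) \times S^1.
\end{align}
Then $\tilde R \vcentcolon= R \circ \pi \colon \SU(2) \to \GL_{14}(\bb{R})$ is a homomorphism, being a composition of homomorphisms. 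So $\tilde F$ has the required form $\tilde R(h) \cdot v(z)$.

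The main point is that freeness is local and pulls back under local diffeomorphisms. Since $\pi \times \mathrm{id}$ is a covering map, it is a local diffeomorphism. Near any point of $S^3 \times S^1$, choose local coordinates transported from coordinates on $P^3 \times S^1$ through $\pi \times \mathrm{id}$. In these coordinates $\tilde F$ and $F$ have literally the same component functions. Hence the two osculating matrices coincide at corresponding points, and $\tilde F$ has rank $14$ everywhere.

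One should check that freeness does not depend on the choice of source coordinates. Under a change of source coordinates, the first and second derivatives transform by an invertible block-triangular matrix: the Jacobian on first derivatives, its symmetric square on second derivatives, and a Hessian correction term feeding second derivatives into first ones. So the rank of the osculating matrix is preserved, and the statement holds in any coordinates.

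For constancy of the osculating determinant, I would compute it in a canonical frame: left-invariant vector fields on the group together with $\partial_z$. This is presumably how the determinant is normalized in \Cref{thm:main}. The differential $d\pi$ identifies the Lie algebras $\fr{su}(2) \cong \fr{so}(3)$, so left-invariant fields on $\SU(2)$ are $\pi$-related to left-invariant fields on $\SO(3)$. In these frames the osculating matrix of $\tilde F$ at $(h, z)$ equals that of $F$ at $(\pi(h), z)$. The determinant is therefore the same constant.

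Finally, the five-parameter family transfers verbatim, one $\tilde F$ for each choice of parameters. There is no real obstacle here. The only care needed is to state precisely in which frame the determinant is measured, so that "constant" is unambiguous.
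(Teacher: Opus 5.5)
Your proposal is correct and matches the paper's argument: the paper's entire proof is ``via the quotient map $\SU(2) \to \SO(3)$,'' i.e.\ pulling $F$ back along $\varpi \times \mathrm{id}$ with $\tilde R = R \circ \varpi$. Since $d\varpi(\Xi_\alpha) = \xi_\alpha$, the matrix $(v', v'', X_\alpha v, X_\alpha v', X_\alpha X_\beta v)$ is literally unchanged, so the same constant determinant results. One small wording slip: under a change of source coordinates, the Hessian term feeds the old first derivatives into the new second derivatives, not the reverse; your block-triangular conclusion is unaffected.
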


\begin{proof}
Via the quotient map $\SU(2) \to \SO(3)$.
\end{proof}

Let us mention that the $4$-manifolds $P^3 \times S^1$ and $S^3 \times S^1$ are of particular interest in complex geometry.
A \dfemph{primary Hopf surface} is a quotient of $\bb{C}^2 - \{0\}$ by an infinite cyclic group generated by some complex-analytic contraction.
More generally, a \dfemph{Hopf surface} is a free quotient of a primary Hopf surface by a finite group.
Such complex surfaces form the simplest examples of non-K\"ahler compact complex manifolds.
In~\cite{kodaira}, Kodaira proved that a complex surface is a primary Hopf surface if and only if it is diffeomorphic to $S^3 \times S^1$.
As a corollary, other Hopf surfaces are diffeomorphic to $P^3 \times S^1$.

In light of \Cref{thm:main} and \Cref{cor:main}, it is natural to ask whether $P^2 \times S^1$ and $S^2 \times S^1$ also admit free immerions in critical dimension.
Recall that a choice of axis lets us identify $S^2$ with $\SO(2)\backslash\SO(3)$.
This suggests that one could bootstrap the free immersion of $S^2 \times S^1$ from \Cref{thm:main}, by forming the quotients of appropriate group actions.

It also seems possible that, by extending De Leo's newer work~\cite{deleo_2}, one could bootstrap free immersions in critical dimension of manifolds of the form $P^3 \times (S^1)^m$ and $S^3 \times (S^1)^m$ from \Cref{thm:main} and \Cref{cor:main}.

\subsection{Acknowledgments}

I thank Roberto De Leo for his kind interest.
I thank GPT 5.6 Sol for symbolic-algebra computations that helped me simplify the Ansatz for $v$ in \Cref{sec:p-3-times-s-1}.
No part of this text was generated by AI.

\section{Lie Groups}

This section reviews background material on Lie groups that is not strictly needed for~\cite{deleo}, but needed for our work.
It may be found in many places in the literature, as the representation theory of the Lie groups most relevant to us, $\UL(1)$ and $\SU(2)$ and $\SO(3)$, is fundamental to the physics of hydrogen-like atoms, though we do not discuss this further.
\emph{Henceforth, all vector spaces will be assumed finite-dimensional}.

\subsection{Coordinates for Groups}\label{subsec:coords}

Given any vector space $V$, we write $\GL(V)$ for its group of linear automorphisms and $\fr{gl}(V)$ for the Lie algebra of $\GL(V)$.
If $V = \bb{F}^n$, then we write $\GL_n(\bb{F})$ and $\fr{gl}_n(\bb{F})$ instead.
We write $I_n \in \GL_n(\bb{F})$ for the identity matrix.
We will also use the standard notations $\UL(n)$, $\SU(n)$, $\SO(n)$ and $\fr{sl}_n(\bb{F})$, $\fr{su}(n)$, $\fr{so}(n)$.

Famously, $\SU(2)$ and $\SO(3)$ are related by a degree-$2$ covering homomorphism $\SU(2) \to \SO(3)$.
It will be useful to describe the embedding $\fr{su}(2) \subseteq \fr{sl}_2(\bb{C})$ and the isomorphism $\fr{su}(2) \to \fr{so}(3)$ in explicit coordinates.
Recall that
\begin{align}
\fr{sl}_n(\bb{F}) &= \{\xi \in \fr{gl}_n(\bb{F}) \mid \tr \xi = 0\},\\
\fr{su}(n) &= \{\xi \in \fr{sl}_n(\bb{C}) \mid \xi^\ast + \xi = 0\},\\
\fr{so}(n)& = \{\xi \in \fr{sl}_n(\bb{R}) \mid \xi^\ast + \xi = 0\}.
\end{align}
Take the standard basis for $\fr{sl}_2(\bb{F})$:
\begin{align}
	e \vcentcolon= \pmat{0&1\\ 0&0},\quad 
	f \vcentcolon= \pmat{0&0\\ 1&0},\quad 
	h \vcentcolon= \pmat{1&0\\ 0&-1}.
\end{align}
Then the following elements form a basis for $\fr{su}(2)$:
\begin{align}
\begin{array}{r@{\:}ll}
\Xi_1 &\vcentcolon= \pmat{0&-\frac{\sqrt{-1}}{2}\\ -\frac{\sqrt{-1}}{2}&0}
		&= -\tfrac{\sqrt{-1}}{2}(e + f),\\[3ex]
\Xi_2 &\vcentcolon= \pmat{0&-\frac{1}{2}\\ \frac{1}{2}&0}
		&= -\tfrac{1}{2}(e - f),\\[3ex]
\Xi_3 &\vcentcolon= \pmat{-\frac{\sqrt{-1}}{2}&0\\ 0&\frac{\sqrt{-1}}{2}}
		&= -\tfrac{\sqrt{-1}}{2}h.
	\end{array}
\end{align}
(That is, $\Xi_i = -\frac{\sqrt{-1}}{2}\sigma_i$ for $i = 1,2,3$, where $\sigma_1, \sigma_2, \sigma_3$ are the Pauli matrices.)
Meanwhile, we can define $\fr{so}(3)$ in a coordinate-free way as the Lie algebra freely generated over $\bb{R}$ by elements $\xi_1, \xi_2, \xi_3$ subject to the relations
\begin{align} 
{[\xi_1, \xi_2]} &= \xi_3,\\
{[\xi_2, \xi_3]} &= \xi_1,\\
{[\xi_3, \xi_1]} &= \xi_2.
\end{align} 
By checking the analogous relations among $\Xi_1, \Xi_2, \Xi_3$, we deduce an isomorphism of Lie algebras $\fr{su}(2) \xrightarrow{\sim} \fr{so}(3)$ sending $\Xi_i \mapsto \xi_i$ for $i = 1, 2, 3$.

Since $\SU(2)$ is simply connected, this isomorphism is the differential $d\varpi$ of some homomorphism of Lie groups 
\begin{align} 
	\varpi \colon \SU(2) \to \SO(3).
\end{align} 
It turns out that $\varpi$ descends to an isomorphism $\SU(2)/\langle \pm I_2\rangle \xrightarrow{\sim} \SO(3)$.

\subsection{Representations}\label{subsec:reps}

Recall that a \dfemph{representation} of a Lie group $G$ on a vector space $V$ is an action of $G$ through a homomorphism into $\GL(V)$.
An isomorphism of representations is a $G$-equivariant linear isomorphism.
If $G$ is compact, then the representation necessarily decomposes as a direct sum of \emph{irreducible} representations, or \dfemph{irreps}, up to isomorphism.

\begin{ex}[The circle]\label{ex:r-mod-z}
Let $C = \bb{R}/2\pi \bb{Z}$.
For each integer $w$, there is a complex irrep $\psi_w \colon C \to \GL_1(\bb{C})$ given by 
\begin{align} 
\psi_w(x) \vcentcolon= \exp (\sqrt{-1}wx),
\end{align} 
called the irrep of \dfemph{winding number} or \dfemph{weight} $w$.
Note that $\psi_1$ identifies $C$ with $\UL(1)$.
Up to isomorphism, these are the only complex irreps.

\end{ex} 

\begin{ex}[The torus]\label{ex:torus}
Every complex irrep of the $d$-fold power $C^d$ is a $d$-fold tensor product of complex irreps of $C$.
So by \Cref{ex:r-mod-z}, the complex irreps are again $1$-dimensional, but now classified by integer $d$-tuples $\vec{w} \in \bb{Z}^d$, still called their \dfemph{weights}.
Explicitly, the complex irrep $\psi_{\vec{w}} \colon C^d \to \GL_1(\bb{C})$ is given by
\begin{align}
\psi_{\vec{w}}(\vec{x}) = \exp(\sqrt{-1}\vec{w} \cdot \vec{x}),
\end{align}
where $- \cdot -$ denotes the dot product.

For a complex representation of $C^d$ to be the complexification of a real one, it must decompose into irreps so that the resulting multiset of weights is stable under $\vec{w} \mapsto -\vec{w}$.
In particular, $\psi_{\vec{0}}$ complexifies the trivial real irrep, which we will denote by $\varphi_{\vec{0}}$, while for each weight $\vec{w} \neq \vec{0}$, the sum $\psi_{\vec{w}} \oplus \psi_{-\vec{w}}$ complexifies a real irrep $\varphi_{\vec{w}} \colon C^d \to \GL_2(\bb{R})$.
Explicitly,
\begin{align} 
\varphi_{\vec{w}}(\vec{x}) \vcentcolon=
	\pmat{
		\cos \vec{w} \cdot \vec{x}
		& -{\sin \vec{w} \cdot \vec{x}}\\ 
		\sin \vec{w} \cdot \vec{x}
		& \cos \vec{w} \cdot \vec{x}
	}.
\end{align} 
Note that $\varphi_{\vec{w}}$ and $\varphi_{-\vec{w}}$ are conjugate, hence isomorphic, for any $\vec{w}$.
Up to isomorphism, the $\varphi_{\vec{w}}$ for $\vec{w} \in \bb{Z}^d/\langle \pm 1\rangle$ are the only real irreps.
Finally, note that in the $d = 1$ case, $\varphi_1$ identifies $C$ with $\SO(2)$.
\end{ex}

\begin{ex}[$\mathrm{SU}(2)$]\label{ex:su-2}
For each integer $k \geq 0$, there is a complex irrep \begin{align} 
\sigma_k \colon \SU(2) \to \GL(W_k),
\end{align} 
where $W_k$ is the $(k + 1)$-dimensional vector space of homogeneous polynomial functions on $\bb{C}^2$ of degree $k$.
Here, $\SU(2)$ acts on $W_k$ by pullback of functions along its action on $\bb{C}^2$.
We say that $\sigma_k$ has \dfemph{weight} $k$.
Up to isomorphism, these are the only complex irreps.

For $\sigma_k$ to be the complexification of a real irrep, $k$ must be even.
In this case, $-I_2 \in \SU(2)$ acts trivially, so $\sigma_k$ factors through the quotient map $\varpi \colon \SU(2) \to \SO(3)$ from \Cref{subsec:coords}.
Up to isomorphism, the real irreps of $\SU(2)$ arising in this way are the only real irreps.
\end{ex}

\begin{ex}[$\mathrm{SO}(3)$]\label{ex:so-3}
Every complex irrep of $\SO(3)$ arises from some complex irrep of $\SU(2)$ of even weight.
That is, in the notation of \Cref{ex:su-2}, every complex irrep of $\SO(3)$ takes the form $\bar{\sigma}_k \colon \SO(3) \to \GL(W_k)$ for some even $k \geq 0$, in which case it is characterized by $\sigma_k = \bar{\sigma}_k \circ \varpi$.

Write $k = 2\ell$.
Then $\sigma_k$ complexifies an irrep of $\SU(2)$ on some $(2\ell + 1)$-dimensional real vector space $V_\ell$.
Therefore, $\bar{\sigma}_k$ complexifies an irrep of $\SO(3)$ of the form
\begin{align} 
\rho_\ell \colon \SO(3) \to \GL(V_\ell).
\end{align} 
Explicitly, one may take $V_\ell$ to be the vector space of homogeneous, degree-$\ell$ harmonic polynomial functions on $\bb{R}^3$.
(The restrictions of these functions to $S^2$ are known as \dfemph{spherical harmonics}.)
Up to isomorphism, the $\rho_\ell$ are the only real irreps of $\SO(3)$.
\end{ex}

\subsection{Directional Derivatives}\label{subsec:derivatives}

Recall that for a general Lie group $G$ with Lie algebra $\fr{g}$, every element $\xi \in \fr{g}$ defines left- and right-invariant vector fields on $G$ by translation.
At $g \in G$, the left-invariant vector field is given by
\begin{align}
g \cdot \xi = \!\left.\tfrac{d}{dt} (g \cdot \exp(t\xi)) \right|_{t \to 0},
\end{align}
where $\exp \colon \fr{g} \to G$ is the usual exponential map.
More generally, if $G$ acts smoothly from the right on a smooth manifold $M$, then $\xi$ defines a vector field on $M$, whose value at $p \in M$ is given by
\begin{align}
p \cdot \xi \vcentcolon= \!\left.\tfrac{d}{dt} (p \cdot \exp(t\xi)) \right|_{t \to 0}.
\end{align}
In particular, for any smooth function $f \colon M \to N$ with differential $df \colon TM \to TN$, we have a directional derivative
\begin{align}
\partial_\xi f(p) 
	&\vcentcolon= df( p \cdot \xi )
	= \!\left.\tfrac{d}{dt} f(p \cdot \exp(t\xi)) \right|_{t \to 0}.
\end{align}
For a representation $\rho \colon G \to \GL(V)$, this formula becomes
\begin{align}\label{eq:derivative}
\partial_\xi \rho(g)
	= \rho(g) \cdot d\rho(\xi),
\end{align}
where $d\rho(\xi) = \!\left.\frac{d}{dt} \rho(\exp(t\xi)) \right|_{t \to 0}$.

\subsection{Coordinates for Representations}\label{subsec:rep-coords}

For the generators $\xi_1, \xi_2, \xi_3 \in \fr{so}(3)$ discussed in \Cref{subsec:coords}, and the real irreps $\rho_\ell \colon \SO(3) \to \GL(V_\ell)$ discussed in \Cref{ex:so-3}, we now derive explicit formulas for the differentials $d\rho_\ell(\xi_i) \in \fr{gl}(V_\ell)$ in coordinates on $V_\ell$ where they become especially clean.
Again, these formulas should exist in the literature, but we find it pleasant to provide a more concise account.

By our prior discussion, it suffices to compute
the differentials $d\sigma_{2\ell}(\Xi_i) \in \fr{gl}(W_{2\ell})$, then find good coordinates for an $\fr{su}(2)$-stable real structure on $W_{2\ell}$.
Let $\sf{x}, \sf{y}$ be the standard coordinates on $\bb{C}^2$, so that $W_{2\ell}$ is the degree-$2\ell$ part of $\bb{C}[\sf{x}, \sf{y}]$.
Let $\{\sf{w}_j^\ell\}_{-\ell \leq j \leq \ell}$
be the monomial basis of $W_{2\ell}$ in which 
\begin{align} 
\sf{w}_j^\ell \vcentcolon= \binom{2\ell}{\ell - j} \sf{x}^{\ell + j} \sf{y}^{\ell - j}.
\end{align} 
For convenience, let $\sf{w}_{-(\ell + 1)}^\ell, \sf{w}_{\ell + 1}^\ell \vcentcolon= 0$.

The $\SU(2)$-action on $W_{2\ell}$ is the restriction of the $\GL_2(\bb{C})$-action
\begin{align}
\pmat{a&b\\ c&d} \cdot \sf{x}^m \sf{y}^n = (a\sf{x} + c\sf{y})^m (b\sf{x} + d\sf{y})^n.
\end{align}
Via the differential of the latter, the elements $e, f, h \in \fr{sl}_2(\bb{C})$ act by
\begin{align}
\left.\begin{array}{@{}r@{\:}l}
e \cdot \sf{w}_j^\ell &= (\ell + j + 1) \sf{w}_{j + 1}^\ell,\\[1ex]
f \cdot \sf{w}_j^\ell &= (\ell - j + 1)\sf{w}_{j - 1}^\ell,\\[1ex]
h \cdot \sf{w}_j^\ell &= 2j\sf{w}_j^\ell
\end{array}\right\} 
\quad\text{for $-\ell \leq j \leq \ell$}.
\end{align}
Expanding the $\Xi_i$ in terms of $e, f, h$, we obtain
\begin{align}
\left.\begin{array}{@{}r@{\:}l}
\Xi_1 \cdot \sf{w}_j^\ell &= -\frac{\sqrt{-1}}{2} ((\ell + j + 1) \sf{w}_{j + 1}^\ell + (\ell - j + 1)\sf{w}_{j - 1}^\ell),\\[1ex]
\Xi_2 \cdot \sf{w}_j^\ell &= -\frac{1}{2} ((\ell + j + 1) \sf{w}_{j + 1}^\ell - (\ell - j + 1)\sf{w}_{j - 1}^\ell),\\[1ex]
\Xi_3 \cdot \sf{w}_j^\ell &= -\sqrt{-1} j\sf{w}_j^\ell
\end{array}\right\} 
\quad\text{for $-\ell \leq j \leq \ell$}.
\end{align}
In analogy with the formation of cosine and sine from $e^{ix}$ and $e^{-ix}$, let
\begin{align}
\left.\begin{array}{@{}r@{\:}l}
\sf{u}_j^\ell &\vcentcolon= \frac{\sqrt{-1}}{2} (\sf{w}_j - (-1)^j\sf{w}_{-j}),\\[1ex]
\sf{v}_j^\ell &\vcentcolon= \frac{1}{2} (\sf{w}_j + (-1)^j\sf{w}_{-j})
\end{array}\right\}
\quad\text{for $-\ell \leq j \leq \ell$}.
\end{align}
Note that $\sf{u}_0^\ell = 0$ and $\sf{v}_0^\ell = \sf{w}_0$.
For convenience, let $\sf{u}_{\ell + 1}^\ell, \sf{v}_{\ell + 1}^\ell \vcentcolon= 0$.
Then, in the basis of $W_{2\ell}$ formed by $\sf{v}_0^\ell, \sf{u}_1^\ell, \sf{v}_1^\ell, \ldots, \sf{u}_\ell^\ell, \sf{v}_\ell^\ell$, we have
\begin{align}
\Xi_1 \cdot \sf{v}_0^\ell &= -(\ell + 1)\sf{u}_1^\ell,\\
	\Xi_1 \cdot \sf{u}_j^\ell &= \tfrac{1}{2}((\ell + j + 1)\sf{v}_{j + 1}^\ell + (\ell - j + 1)\sf{v}_{j - 1}^\ell),\\
	\Xi_1 \cdot \sf{v}_j^\ell &= -\tfrac{1}{2}((\ell + j + 1)\sf{u}_{j + 1}^\ell + (\ell - j + 1)\sf{u}_{j - 1}^\ell),\\
\Xi_2 \cdot \sf{v}_0^\ell &= -(\ell + 1)\sf{v}_1^\ell,\\
	\Xi_2 \cdot \sf{u}_j^\ell &= -\tfrac{1}{2}((\ell + j + 1)\sf{u}_{j + 1}^\ell - (\ell - j + 1)\sf{u}_{j - 1}^\ell),\\
	\Xi_2 \cdot \sf{v}_j^\ell &= -\tfrac{1}{2}((\ell + j + 1)\sf{v}_{j + 1}^\ell - (\ell - j + 1)\sf{v}_{j - 1}^\ell),\\
\Xi_3 \cdot \sf{v}_0^\ell &= 0,\\
	\Xi_3 \cdot \sf{u}_j^\ell &= j\sf{v}_j^\ell,\\
	\Xi_3 \cdot \sf{v}_j^\ell &= -j\sf{u}_j^\ell.
\end{align}
So the real span of this basis forms an $\fr{su}(2)$-stable real subspace $V_\ell \subseteq W_{2\ell}$.
Moreover, the formulas above completely describe the matrices for $d\rho_\ell(\xi_i) = d\sigma_{2\ell}(\Xi_i) \in \fr{gl}(V_\ell)$ with respect to the ordered basis.

\begin{ex}\label{ex:d-rho-xi}
We list $d\rho_\ell(\xi_1), d\rho_\ell(\xi_2), d\rho_\ell(\xi_3)$, in that order, for small $\ell$.
\begin{enumerate} 
\item[$\ell = 1$:]
\begin{align}
\pmat{\cdot&\frac{1}{2}&\cdot\\ 
	-2&\cdot&\cdot\\ 
	\cdot&\cdot&\cdot},\quad 
\pmat{\cdot&\cdot&\frac{1}{2}\\ 
	\cdot&\cdot&\cdot\\ 
	-2&\cdot&\cdot},\quad 
\pmat{\cdot&\cdot&\cdot\\ 
	\cdot&\cdot&-1\\ 
	\cdot&1&\cdot}.
\end{align}

\item[$\ell = 2$:]
\begin{align}
&\pmat{\cdot&1&\cdot&\cdot&\cdot\\
	-3&\cdot&\cdot&\cdot&-\frac{1}{2}\\
	\cdot&\cdot&\cdot&\frac{1}{2}&\cdot\\ 
	\cdot&\cdot&-2&\cdot&\cdot\\ 
	\cdot&2&\cdot&\cdot&\cdot},\quad 
\pmat{\cdot&\cdot&1&\cdot&\cdot\\
	\cdot&\cdot&\cdot&\frac{1}{2}&\cdot\\
	-3&\cdot&\cdot&\cdot&\frac{1}{2}\\ 
	\cdot&-2&\cdot&\cdot&\cdot\\ 
	\cdot&\cdot&-2&\cdot&\cdot},\\[0.5ex]
&\pmat{\cdot&\cdot&\cdot&\cdot&\cdot\\
	\cdot&\cdot&-1&\cdot&\cdot\\
	\cdot&1&\cdot&\cdot&\cdot\\
	\cdot&\cdot&\cdot&\cdot&-2\\
	\cdot&\cdot&\cdot&2&\cdot}.
\end{align}

\item[$\ell = 3$:]
\begin{align}
&\small\pmat{\cdot&\frac{3}{2}&\cdot&\cdot&\cdot&\cdot&\cdot\\
	-4&\cdot&\cdot&\cdot&-1&\cdot&\cdot\\ 
	\cdot&\cdot&\cdot&1&\cdot&\cdot&\cdot\\ \cdot&\cdot&-\frac{5}{2}&\cdot&\cdot&\cdot&-\frac{1}{2}\\ \cdot&\frac{5}{2}&\cdot&\cdot&\cdot&\frac{1}{2}&\cdot\\ \cdot&\cdot&\cdot&\cdot&-3&\cdot&\cdot\\ \cdot&\cdot&\cdot&3&\cdot&\cdot&\cdot},\quad 
\pmat{\cdot&\cdot&\frac{3}{2}&\cdot&\cdot&\cdot&\cdot\\
	\cdot&\cdot&\cdot&1&\cdot&\cdot&\cdot\\
	-4&\cdot&\cdot&\cdot&1&\cdot&\cdot\\ \cdot&-\frac{5}{2}&\cdot&\cdot&\cdot&\frac{1}{2}&\cdot\\ \cdot&\cdot&-\frac{5}{2}&\cdot&\cdot&\cdot&\frac{1}{2}\\ \cdot&\cdot&\cdot&-3&\cdot&\cdot&\cdot\\
	\cdot&\cdot&\cdot&\cdot&-3&\cdot&\cdot},\\[0.5ex]
&\small\pmat{\cdot&\cdot&\cdot&\cdot&\cdot&\cdot&\cdot\\
	\cdot&\cdot&-1&\cdot&\cdot&\cdot&\cdot\\ \cdot&1&\cdot&\cdot&\cdot&\cdot&\cdot\\ \cdot&\cdot&\cdot&\cdot&-2&\cdot&\cdot\\ \cdot&\cdot&\cdot&2&\cdot&\cdot&\cdot\\ \cdot&\cdot&\cdot&\cdot&\cdot&\cdot&-3\\ \cdot&\cdot&\cdot&\cdot&\cdot&3&\cdot}.
\end{align}

\end{enumerate}
\mbox{}
\end{ex}

\section{Freeness Criteria}

Fix a real Lie group $G$ of dimension $d$, a finite-dimensional real vector space $V$, a representation $R \colon G \to \GL(V)$, and a $C^2$ map $v \colon S^1 \to V$.
Let
\begin{align} 
F \colon G \times S^1 \to V
\end{align}
be defined by 
\begin{align}\label{eq:factorization}
F(g, z) = R(g) \cdot v(z).
\end{align}
We now explain how \eqref{eq:factorization} simplifies the study of the freeness of $F$.

Let $\fr{g}$ be the Lie algebra of $G$.
Choose a basis $(\xi_\alpha)_\alpha$ for $\fr{g}$, and set
\begin{align}
X_\alpha = dR(\xi_\alpha)
\quad\text{for all $\alpha$}.
\end{align}
By \eqref{eq:derivative}, the first- and second-order partials of $R$ with respect to the left-invariant vector fields induced by the $\xi_\alpha$ are, respectively,
\begin{align}
\begin{array}{l@{\quad}l}
R X_\alpha
	&\text{for $1 \leq \alpha \leq d$},\\
R X_\alpha X_\beta
	&\text{for $1 \leq \alpha \leq \beta \leq d$}.
\end{array}
\end{align}
Similarly, the first-order partials of $F$ with respect to these vector fields are
\begin{align}
\begin{array}{l@{\quad}l}
R X_\alpha \cdot v
	&\text{for $1 \leq \alpha \leq d$},\\
R \cdot v',
\end{array}
\end{align}
while the second-order partials of $F$ are
\begin{align}
\begin{array}{l@{\quad}l}
R X_\alpha X_\beta \cdot v
	&\text{for $1 \leq \alpha \leq \beta \leq d$},\\
R X_\alpha \cdot v'
	&\text{for $1 \leq \alpha \leq d$},\\
R \cdot v''.
\end{array}
\end{align}
Since the values of $R$ are invertible linear operators, we deduce:

\begin{prop}\label{prop:partials}
\begin{enumerate} 
\item 	$G \xrightarrow{R} \GL(V) \to \fr{gl}(V)$ is a free immersion if and only if the list of matrices
\begin{align}\label{eq:list-r}
	X_\alpha,\quad
	X_\alpha X_\beta
\end{align} 
is linearly independent.

\item 	$F$ is a free immersion if and only if the list of vectors 
\begin{align}\label{eq:list-f}
v',\quad 
v'',\quad 
X_\alpha \cdot v,\quad
X_\alpha \cdot v',\quad 
X_\alpha X_\beta \cdot v
\end{align} 
is linearly independent for all $z \in S^1$.

\end{enumerate}
\end{prop}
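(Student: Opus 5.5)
The plan is to reduce both parts to the formulas for the partials computed just above, and then to factor out the invertible operator $R(g)$. Freeness is a pointwise condition, so we may compute the osculating matrix in any local frame of vector fields; a change of frame acts on the rows by an invertible matrix and therefore preserves rank. Near any point $(g,z)$ we use the frame formed by the left-invariant fields $g\cdot\xi_\alpha$, $1\le\alpha\le d$, together with $\partial_z$. Their second-order directional derivatives do not quite coincide with coordinate second partials. For the left-invariant fields, however, $\partial_\alpha\partial_\beta$ and $\partial_\beta\partial_\alpha$ differ by $\partial_{[\xi_\alpha,\xi_\beta]}$, which is a first-order operator. Moreover, passing from a coordinate Hessian to the frame Hessian adds only first-order terms, with a triangular, invertible change on the second-order part.

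It follows that the span of $\{\partial_\alpha F,\ \partial_\alpha\partial_\beta F\ (\alpha\le\beta)\}$ at a point equals the span of the coordinate first and second partials. So $F$ is free if and only if these $q_m$ vectors are linearly independent, since their number equals $m+\binom{m+1}{2}$.

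For part (2), we substitute the listed partials of $F$. The relevant vectors are $RX_\alpha v$, $Rv'$, $RX_\alpha X_\beta v$ for $\alpha\le\beta$, $RX_\alpha v'$, and $Rv''$. Left-multiplication by the invertible $R(g)$ preserves linear independence, so the condition is exactly independence of the list \eqref{eq:list-f} at $z$. This condition is independent of $g$, so it suffices to require it for all $z\in S^1$.

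Part (1) is the same argument applied to the map $G\to\fr{gl}(V)$, with $v$ removed. Here $\fr{gl}(V)$ is a vector space and $A\mapsto R(g)A$ is an invertible linear map on it.

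The main care point is the frame-change justification. We must check that using noncommuting left-invariant fields, with only $\alpha\le\beta$, yields the same span as coordinate partials. This follows from the commutator identity $[\partial_\alpha,\partial_\beta]=\partial_{[\xi_\alpha,\xi_\beta]}$ together with the fact that the $\partial_{[\xi_\alpha,\xi_\beta]}F$ are already among the first-order vectors. The remaining steps are immediate.
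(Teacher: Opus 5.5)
Your proposal is correct and is essentially the paper's argument: express the first- and second-order derivatives of $F$ (or of $R$) along the left-invariant fields and $\partial_z$ via \eqref{eq:derivative}, then cancel the invertible operator $R(g)$, which also makes the condition independent of $g$. Your one addition is an explicit check, left implicit in the paper, that this non-coordinate, non-commuting frame spans the same osculating space as coordinate partials; the change of basis is block-triangular with invertible diagonal blocks (the second-order block is the symmetric square of the frame change, so not literally triangular), and that is all the argument needs.
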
 

\begin{cor}\label{cor:partials}
If $F$ is a free immersion, then $G \xrightarrow{R} \GL(V) \to \fr{gl}(V)$ is a free immersion.
\end{cor}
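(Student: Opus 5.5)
Corollary~\ref{cor:partials} follows from Proposition~\ref{prop:partials} by a linear-algebra argument. The plan is to show that any linear dependence among the matrices in \eqref{eq:list-r} yields a dependence among the vectors in \eqref{eq:list-f}. Suppose $F$ is a free immersion. By part (2) of Proposition~\ref{prop:partials}, for every $z \in S^1$ the vectors in \eqref{eq:list-f} are linearly independent. In particular, fixing any $z_0$ and setting $w \vcentcolon= v(z_0)$, the sublist
\begin{align}
X_\alpha \cdot w \quad (1 \leq \alpha \leq d),\qquad X_\alpha X_\beta \cdot w \quad (1 \leq \alpha \leq \beta \leq d)
\end{align}
is linearly independent, since any sublist of an independent list is independent.

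Next, suppose there were a linear relation among the matrices \eqref{eq:list-r}:
\begin{align}
\sum_\alpha a_\alpha X_\alpha + \sum_{\alpha \leq \beta} b_{\alpha\beta} X_\alpha X_\beta = 0
\end{align}
in $\fr{gl}(V)$. Apply both sides to the vector $w$. This gives the same linear relation among the vectors $X_\alpha \cdot w$ and $X_\alpha X_\beta \cdot w$. By the independence just established, all the coefficients $a_\alpha$ and $b_{\alpha\beta}$ vanish. Hence the list \eqref{eq:list-r} is linearly independent.

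Finally, part (1) of Proposition~\ref{prop:partials} converts this independence into the statement that $G \to \GL(V) \to \fr{gl}(V)$ is a free immersion. There is no real obstacle here. The only point to check is that the indexing conventions agree: part (1) uses the same pairs $\alpha \leq \beta$ as part (2), and the left-invariant frame is the same in both. This holds because both parts are derived from the same computation with \eqref{eq:derivative}.
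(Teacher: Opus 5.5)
Your proposal is correct and matches the paper's proof. The paper says only that the corollary is immediate from comparing \eqref{eq:list-r} and \eqref{eq:list-f}; you make that comparison explicit by applying a putative relation among the $X_\alpha, X_\alpha X_\beta$ to $v(z_0)$ and then invoking both parts of \Cref{prop:partials}.
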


\begin{proof}
Immediate from comparing \eqref{eq:list-r} and \eqref{eq:list-f}.
\end{proof}

\begin{cor}\label{cor:dim-2}
If $G$ is compact, $F$ is a free immersion, and $\dim V = q_{d + 1}$, then the $G$-invariant subspace $V^G \subseteq V$ has dimension at most $2$.
\end{cor}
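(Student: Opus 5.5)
The plan is to count dimensions using \Cref{prop:partials}(2). Since $\dim V = q_{d+1} = (d+1) + \binom{d+2}{2}$, it is enough to check that the list \eqref{eq:list-f} has exactly $q_{d+1}$ entries:
\begin{align}
2 + d + d + \binom{d+1}{2} = (d+1) + \binom{d+2}{2}.
\end{align}
So if $F$ is free, then at each $z \in S^1$ this list is a basis of $V$.

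The key step is to show that all vectors in \eqref{eq:list-f} except $v'$ and $v''$ lie in a fixed complement of $V^G$.

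\begin{itemize}
\item \emph{Choosing the complement.} Since $G$ is compact, I average an inner product on $V$ over $G$. This gives a $G$-invariant inner product, so the orthogonal complement $V'$ of the $G$-invariant subspace $V^G$ is $G$-stable, and $V = V^G \oplus V'$.
\item \emph{$X_\alpha$ kills $V^G$.} If $u \in V^G$, then $R(\exp(t\xi_\alpha))u = u$ for all $t$. Differentiating at $t = 0$ gives $X_\alpha u = 0$. This holds even if $G$ is disconnected, because only the identity component is used.
\item \emph{$X_\alpha$ preserves $V'$.} Since $V'$ is $G$-stable, the same differentiation shows $X_\alpha V' \subseteq V'$.
\end{itemize}

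It follows that $X_\alpha(V) \subseteq V'$ for every $\alpha$, and hence $X_\alpha X_\beta(V) \subseteq V'$ as well. Consequently, for each $z$, the vectors
\begin{align}
X_\alpha \cdot v,\quad X_\alpha \cdot v',\quad X_\alpha X_\beta \cdot v
\end{align}
all lie in $V'$.

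These vectors number $q_{d+1} - 2$, and they are linearly independent because they belong to the basis \eqref{eq:list-f}. Therefore $\dim V' \geq q_{d+1} - 2$, and so
\begin{align}
\dim V^G = \dim V - \dim V' \leq 2.
\end{align}
Evaluating at a single $z$ suffices.

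I do not expect a serious obstacle. The only point that needs care is that the images of the $X_\alpha$ avoid $V^G$ entirely, not merely that the $X_\alpha$ vanish on $V^G$. This is exactly why the complement $V'$ must be $G$-stable, which is where compactness of $G$ enters.
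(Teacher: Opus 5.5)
Your proposal is correct and is essentially the paper's argument. The paper takes an invariant projection $V \to V^G$ and observes that only $v'$ and $v''$ can project nontrivially, so their projections must span $V^G$; your $G$-stable complement $V'$ is the kernel of that projection, and counting $q_{d+1}-2$ independent vectors inside $V'$ is the dual form of the same count. Your explicit check that $X_\alpha(V) \subseteq V'$, rather than merely $X_\alpha|_{V^G} = 0$, spells out the equivariance step that the paper leaves implicit.
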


\begin{proof}
Since $G$ is compact, we can pick an invariant projection $V \to V^G$.
The operators $X_\alpha$ act by zero on $V^G$, so in the list of vectors \eqref{eq:list-f}, only $v'$ and $v''$ can possibly have nonzero projections to $V^G$.
But if $\dim V = q_{d + 1}$ and \eqref{eq:list-f} is linearly independent, then \eqref{eq:list-f} actually forms a basis for $V$.
Then the projections of $v'$ and $v''$ to $V^G$ must span $V^G$.
\end{proof}

\begin{ex}\label{ex:quadratic}
In the notation of \Cref{ex:torus}, take $G = C^d$ and $V = \bb{R}^q$ and 
\begin{align} 
R = \varphi_{\vec{w}^{(1)}} \oplus \cdots \oplus \varphi_{\vec{w}^{(r)}} \oplus \varphi_{\vec{0}}^{\oplus s},
\end{align} 
for some $r, s$ such that $q = 2r + s$ and some weights $\vec{w}^{(i)} = (w_1^{(i)}, \ldots, w_d^{(i)}) \in \bb{Z}^d$.
Taking $\xi_\alpha = \partial_{x_\alpha}$ for $1 \leq \alpha \leq d$, we compute
\begin{align}\label{eq:de-leo-x}
X_\alpha = w^{(1)}_\alpha J \oplus \cdots \oplus w^{(r)}_\alpha J \oplus 0_s,
	\quad\text{where $J = \pmat{0 & -1 \\ 1 & 0}$},
\end{align}
from which
\begin{align}
X_\alpha X_\beta = -w_\alpha^{(1)}w_\beta^{(1)} I_2 \oplus \cdots \oplus -w_\alpha^{(r)}w_\beta^{(r)} I_2 \oplus 0_s.
\end{align} 
(Above, $0_s$ is the $s \times s$ zero matrix.)

We deduce that for the $X_\alpha X_\beta$ to be linearly independent, it is necessary that whenever a (real) quadratic $k$-form $Q(\vec{x}) = \sum_{\alpha, \beta} c_{\alpha\beta} x_\alpha x_\beta$ satisfies $Q(\vec{w}^{(1)}) = \cdots = Q(\vec{w}^{(r)}) = 0$, we must have $c_{\alpha\beta}= 0$ for all $\alpha, \beta$.
In other words, for $C^d \xrightarrow{R} \GL(V) \to \fr{gl}(V)$ to be free, the evaluation map from the vector space of quadratic $d$-forms into $\bb{R}^r$ that sends
\begin{align}\label{eq:quadratic}
	Q \mapsto (Q(\vec{w}^{(1)}), \ldots, Q(\vec{w}^{(d)}))
\end{align}
must be injective.
In particular, we must have 
\begin{align} 
\binom{d + 1}{2} \leq r \leq \lfloor q/2\rfloor.
\end{align}
When $q = q_{d + 1}$, this inequality forces $d \leq 4$.

By \Cref{cor:partials}, the discussion above refines the discussion in~\cite[\S{2.5}]{deleo}, which showed directly that a map $F \colon (S^1)^m \to \bb{R}^{q_m}$ of the form \eqref{eq:factorization}, via $C \simeq S^1$, can be free only if $m \leq 5$.
\end{ex}

\begin{rem} 
In the setup of \Cref{ex:quadratic}, the freeness of $C^d \xrightarrow{R} \GL(V) \to \fr{gl}(V)$ also forces the weights $\vec{w}^{(r)}, \ldots, \vec{w}^{(r)}$ to be linearly independent as vectors in $\bb{R}^d$.
For otherwise, they all vanish on some nonzero linear functional $\lambda$ on $\bb{R}^d$.
But then the symmetric square of $\lambda$ is a nonzero quadratic form that vanishes on each weight.

Note that the linear independence of the weights is a tighter constraint than the linear independence of the $X_\alpha$, due to \eqref{eq:de-leo-x} and the bound $\binom{d + 1}{2} \leq r$.
\end{rem} 

\begin{ex}\label{ex:veronese-so-3}
Take $G = \SO(3)$.
In the notation of \Cref{ex:so-3}, take $V = V_1$ and $R = \rho_1$.
Here, one can check that the nine matrices $X_\alpha, X_\alpha X_\beta$ are indeed linearly independent, so $\rho_1$ gives rise to a free immersion 
\begin{align}
\SO(3) \to \fr{gl}(V_1) \simeq \bb{R}^9.
\end{align}
Since $q_3 = 9$, this is a free immersion in critical dimension.

In fact, after identifying $\SO(3)$ with $P^3$, one can check that this map is just a rescaled version of the free immersion of $P^3$ into $\bb{R}^9$ that arises from the Veronese embedding of $P^3$ into $P^9$, discussed in the introduction.
\end{ex}

\section{\texorpdfstring{The $P^3 \times S^1$ Construction}{The P³ × S¹ Construction}}\label{sec:p-3-times-s-1}

Henceforth, we focus on maps $F \colon \SO(3) \times S^1 \to \bb{R}^{14}$ of the form \eqref{eq:factorization}.

\subsection{$R$}\label{subsec:r}

The possibilities for $R$ are constrained.
In the notation of \Cref{ex:so-3}, $R$ must be isomorphic to a direct sum of the irreps $\rho_\ell$ for integers $\ell \geq 0$, where the underlying vector space of $\rho_\ell$ has dimension $2\ell + 1$.
In particular, these dimensions must sum to $14$.
Moreover, by \Cref{cor:dim-2}, the multiplicity of the trivial irrep $\rho_0$ must be $\leq 2$.
So the possibilities are:
\begin{align} 
\begin{array}{ll}
\text{decomposition of $R$ into irreps}
	&\text{multiset of irrep dimensions}\\
	\hline 
\rho_0^2 \oplus \rho_1 \oplus \rho_4 
	&1,1,3,9\\
\rho_0^2 \oplus \rho_2 \oplus \rho_3 
	&1,1,5,7\\
\rho_0 \oplus \rho_1^2 \oplus \rho_3
	&1,3,3,7\\
\rho_0 \oplus \rho_1 \oplus \rho_2^2
	&1,3,5,5\\
\rho_1^3 \oplus \rho_2
	&3,3,3,5
\end{array}
\end{align}
It is preferable to maximize the multiplicity of $\rho_0$, since this makes the determinant needed to check the linear independence in \Cref{prop:partials}(2) easier to compute.
At the same time, it is preferable to keep the irrep dimensions small, to keep the operators $X_\alpha$ and $X_\alpha X_\beta$ in \Cref{prop:partials} manageable.
We choose
\begin{align}\label{eq:5-7}
R = \rho_2 \oplus \rho_3 \oplus \rho_0^2,
\end{align} 
now putting $\rho_0$ at the end of the sum for visual consistency with \Cref{ex:quadratic} and~\cite{deleo}.
Henceforth, we regard the codomain of $F$ as $V_2 \oplus V_3 \oplus V_0^2$.
We choose the operators $X_\alpha$ to take the form
\begin{align} 
X_\alpha = d\rho_2(\xi_\alpha) \oplus d\rho_3(\xi_\alpha) \oplus 0_2
\quad\text{for $\alpha = 1,2,3$},
\end{align} 
where $d\rho_2(\xi_\alpha)$ and $d\rho_3(\xi_\alpha)$ are written out explicitly in \Cref{ex:d-rho-xi}.

To compute the products $X_\alpha X_\beta$, we simply compute $X_{\alpha\beta}^{(\ell)} \vcentcolon= d\rho_\ell(\xi_\alpha) d\rho_\ell(\xi_\beta)$ for $\ell = 2, 3$.
In the order 
\begin{align} 
X_{11}^{(\ell)},\quad
X_{12}^{(\ell)},\quad
X_{13}^{(\ell)},\quad
X_{22}^{(\ell)},\quad
X_{23}^{(\ell)},\quad
X_{33}^{(\ell)},
\end{align} 
they are as follows:
\begin{enumerate} 
\item[$\ell = 2$:]
\begin{align}
&\pmat{-3&\cdot&\cdot&\cdot&-\frac12\\
		\cdot&-4&\cdot&\cdot&\cdot\\
		\cdot&\cdot&-1&\cdot&\cdot\\
		\cdot&\cdot&\cdot&-1&\cdot\\
		-6&\cdot&\cdot&\cdot&-1},\quad 
	\pmat{\cdot&\cdot&\cdot&\frac12&\cdot\\
		\cdot&\cdot&-2&\cdot&\cdot\\
		\cdot&-1&\cdot&\cdot&\cdot\\
		6&\cdot&\cdot&\cdot&-1\\
		\cdot&\cdot&\cdot&1&\cdot},\\[0.5ex]
&\pmat{\cdot&\cdot&-1&\cdot&\cdot\\
		\cdot&\cdot&\cdot&-1&\cdot\\
		\cdot&\cdot&\cdot&\cdot&-1\\
		\cdot&-2&\cdot&\cdot&\cdot\\
		\cdot&\cdot&-2&\cdot&\cdot},\quad
	\pmat{-3&\cdot&\cdot&\cdot&\frac12\\
		\cdot&-1&\cdot&\cdot&\cdot\\
		\cdot&\cdot&-4&\cdot&\cdot\\
		\cdot&\cdot&\cdot&-1&\cdot\\
		6&\cdot&\cdot&\cdot&-1},\\[0.5ex]
&\pmat{\cdot&1&\cdot&\cdot&\cdot\\
		\cdot&\cdot&\cdot&\cdot&-1\\
		\cdot&\cdot&\cdot&1&\cdot\\
		\cdot&\cdot&2&\cdot&\cdot\\
		\cdot&-2&\cdot&\cdot&\cdot},\quad
	\pmat{\cdot&\cdot&\cdot&\cdot&\cdot\\
		\cdot&-1&\cdot&\cdot&\cdot\\
		\cdot&\cdot&-1&\cdot&\cdot\\
		\cdot&\cdot&\cdot&-4&\cdot\\
		\cdot&\cdot&\cdot&\cdot&-4}.
\end{align}

\item[$\ell = 3$:]
\begin{align}
&\small\pmat{-6&\cdot&\cdot&\cdot&-\frac32&\cdot&\cdot\\
		\cdot&-\frac{17}{2}&\cdot&\cdot&\cdot&-\frac12&\cdot\\
		\cdot&\cdot&-\frac52&\cdot&\cdot&\cdot&-\frac12\\
		\cdot&\cdot&\cdot&-4&\cdot&\cdot&\cdot\\
		-10&\cdot&\cdot&\cdot&-4&\cdot&\cdot\\
		\cdot&-\frac{15}{2}&\cdot&\cdot&\cdot&-\frac32&\cdot\\
		\cdot&\cdot&-\frac{15}{2}&\cdot&\cdot&\cdot&-\frac32},\quad 
	\pmat{\cdot&\cdot&\cdot&\frac32&\cdot&\cdot&\cdot\\
		\cdot&\cdot&-\frac72&\cdot&\cdot&\cdot&-\frac12\\
		\cdot&-\frac52&\cdot&\cdot&\cdot&\frac12&\cdot\\
		10&\cdot&\cdot&\cdot&-1&\cdot&\cdot\\
		\cdot&\cdot&\cdot&1&\cdot&\cdot&\cdot\\
		\cdot&\cdot&\frac{15}{2}&\cdot&\cdot&\cdot&-\frac32\\
		\cdot&-\frac{15}{2}&\cdot&\cdot&\cdot&\frac32&\cdot},\\[0.5ex]
&\small\pmat{\cdot&\cdot&-\frac32&\cdot&\cdot&\cdot&\cdot\\
		\cdot&\cdot&\cdot&-2&\cdot&\cdot&\cdot\\
		\cdot&\cdot&\cdot&\cdot&-2&\cdot&\cdot\\
		\cdot&-\frac52&\cdot&\cdot&\cdot&-\frac32&\cdot\\
		\cdot&\cdot&-\frac52&\cdot&\cdot&\cdot&-\frac32\\
		\cdot&\cdot&\cdot&-6&\cdot&\cdot&\cdot\\
		\cdot&\cdot&\cdot&\cdot&-6&\cdot&\cdot},\quad
	\pmat{-6&\cdot&\cdot&\cdot&\frac32&\cdot&\cdot\\
		\cdot&-\frac52&\cdot&\cdot&\cdot&\frac12&\cdot\\
		\cdot&\cdot&-\frac{17}{2}&\cdot&\cdot&\cdot&\frac12\\
		\cdot&\cdot&\cdot&-4&\cdot&\cdot&\cdot\\
		10&\cdot&\cdot&\cdot&-4&\cdot&\cdot\\
		\cdot&\frac{15}{2}&\cdot&\cdot&\cdot&-\frac32&\cdot\\
		\cdot&\cdot&\frac{15}{2}&\cdot&\cdot&\cdot&-\frac32},\\[0.5ex]
&\small\pmat{\cdot&\frac32&\cdot&\cdot&\cdot&\cdot&\cdot\\
		\cdot&\cdot&\cdot&\cdot&-2&\cdot&\cdot\\
		\cdot&\cdot&\cdot&2&\cdot&\cdot&\cdot\\
		\cdot&\cdot&\frac52&\cdot&\cdot&\cdot&-\frac32\\
		\cdot&-\frac52&\cdot&\cdot&\cdot&\frac32&\cdot\\
		\cdot&\cdot&\cdot&\cdot&6&\cdot&\cdot\\
		\cdot&\cdot&\cdot&-6&\cdot&\cdot&\cdot},\quad
	\pmat{\cdot&\cdot&\cdot&\cdot&\cdot&\cdot&\cdot\\
		\cdot&-1&\cdot&\cdot&\cdot&\cdot&\cdot\\
		\cdot&\cdot&-1&\cdot&\cdot&\cdot&\cdot\\
		\cdot&\cdot&\cdot&-4&\cdot&\cdot&\cdot\\
		\cdot&\cdot&\cdot&\cdot&-4&\cdot&\cdot\\
		\cdot&\cdot&\cdot&\cdot&\cdot&-9&\cdot\\
		\cdot&\cdot&\cdot&\cdot&\cdot&\cdot&-9}.
\end{align}

\end{enumerate}
\mbox{}

\subsection{$v$}

In De Leo's free immersions of $(S^1)^2$ and $(S^1)^3$~\cite[\S{2.1}--{2.2}]{deleo}, the Ansatz for $v \colon S^1 \to \bb{R}^q$ is simply the parametrization of an ellipse:
\begin{align}\label{eq:ellipse}
v(z) = A + (\cos z) B + (\sin z) C
\quad\text{for some $A, B, C \in \bb{R}^q$}.
\end{align}
In his free immersions of $(S^1)^4$ and $(S^1)^5$~\cite[\S{2.3}--{2.4}]{deleo}, it takes this form outside of one coordinate.

Moreover, the ellipse above is constructed so that its projection into most irreducible summands of $R \colon \UL(1)^{m - 1} \to \bb{R}^q$ remain ellipses.

This suggests that we try an Ansatz of the form \eqref{eq:ellipse}, whose projections onto $V_2$ and $V_3$ via \eqref{eq:5-7} remain ellipses.
We assume that
\begin{align}
v(z) = \pmat{v_2(z) \\ v_3(z) \\ \gamma \cos z \\ \gamma \sin z}
\quad\text{for some ellipses $v_\ell \colon S^1 \to V_\ell$ and $\gamma \neq 0$}.
\end{align}
We credit GPT-5.6 Sol with observing that our $v$ can have zero entries, in contrast to De Leo's $v$'s.
Ultimately, we choose $v_2$ and $v_3$ to be the \emph{circles}
\begin{align}
v_2 = \pmat{a\\ 0\\ 0\\ \alpha \cos z\\ \alpha \sin z},\quad 
v_3 = \pmat{b\\ \beta \cos z\\ \beta \sin z\\ 0\\ 0\\ 0\\ 0}
\quad\text{for some $a, b, \alpha, \beta \neq 0$}.
\end{align}
Let $c = \cos z$ and $s = \sin z$.
Then $\pmat{v' &v'' &X_\alpha \cdot v &X_\alpha \cdot v' &X_\alpha X_\beta \cdot v}$ is
\begin{align} 
\left(\begin{matrix}
\cdot&\cdot&\cdot&\cdot&\cdot&\cdot&\cdot&\cdot\\
\cdot&\cdot&-3a-\frac{\alpha s}{2}&\frac{\alpha c}{2}&\cdot&-\frac{\alpha c}{2}&-\frac{\alpha s}{2}&\cdot\\
\cdot&\cdot&\frac{\alpha c}{2}&-3a+\frac{\alpha s}{2}&\cdot&-\frac{\alpha s}{2}&\frac{\alpha c}{2}&\cdot\\
-\alpha s&-\alpha c&\cdot&\cdot&-2\alpha s&\cdot&\cdot&-2\alpha c\\
\alpha c&-\alpha s&\cdot&\cdot&2\alpha c&\cdot&\cdot&-2\alpha s\\
\cdot&\cdot&\frac{3\beta c}{2}&\frac{3\beta s}{2}&\cdot&-\frac{3\beta s}{2}&\frac{3\beta c}{2}&\cdot\\
-\beta s&-\beta c&-4b&\cdot&-\beta s&\cdot&\cdot&-\beta c\\
\beta c&-\beta s&\cdot&-4b&\beta c&\cdot&\cdot&-\beta s\\
\cdot&\cdot&-\frac{5\beta s}{2}&-\frac{5\beta c}{2}&\cdot&-\frac{5\beta c}{2}&\frac{5\beta s}{2}&\cdot\\
\cdot&\cdot&\frac{5\beta c}{2}&-\frac{5\beta s}{2}&\cdot&-\frac{5\beta s}{2}&-\frac{5\beta c}{2}&\cdot\\
\cdot&\cdot&\cdot&\cdot&\cdot&\cdot&\cdot&\cdot\\
\cdot&\cdot&\cdot&\cdot&\cdot&\cdot&\cdot&\cdot\\
-\gamma s&-\gamma c&\cdot&\cdot&\cdot&\cdot&\cdot&\cdot\\
\gamma c&-\gamma s&\cdot&\cdot&\cdot&\cdot&\cdot&\cdot
\end{matrix}\right.
\end{align} 
\begin{align} 
\left.\begin{matrix}
-3a-\frac{\alpha s}{2}&\frac{\alpha c}{2}&\cdot&-3a+\frac{\alpha s}{2}&\cdot&\cdot\\
\cdot&\cdot&-\alpha c&\cdot&-\alpha s&\cdot\\
\cdot&\cdot&-\alpha s&\cdot&\alpha c&\cdot\\
-\alpha c&6a-\alpha s&\cdot&-\alpha c&\cdot&-4\alpha c\\
-6a-\alpha s&\alpha c&\cdot&6a-\alpha s&\cdot&-4\alpha s\\
-6b&\cdot&-\frac{3\beta s}{2}&-6b&\frac{3\beta c}{2}&\cdot\\
-\frac{17\beta c}{2}&-\frac{7\beta s}{2}&\cdot&-\frac{5\beta c}{2}&\cdot&-\beta c\\
-\frac{5\beta s}{2}&-\frac{5\beta c}{2}&\cdot&-\frac{17\beta s}{2}&\cdot&-\beta s\\
\cdot&10b&-\frac{5\beta c}{2}&\cdot&\frac{5\beta s}{2}&\cdot\\
-10b&\cdot&-\frac{5\beta s}{2}&10b&-\frac{5\beta c}{2}&\cdot\\
-\frac{15\beta c}{2}&\frac{15\beta s}{2}&\cdot&\frac{15\beta c}{2}&\cdot&\cdot\\
-\frac{15\beta s}{2}&-\frac{15\beta c}{2}&\cdot&\frac{15\beta s}{2}&\cdot&\cdot\\
\cdot&\cdot&\cdot&\cdot&\cdot&\cdot\\
\cdot&\cdot&\cdot&\cdot&\cdot&\cdot
\end{matrix}\right).
\end{align} 
We arrive at
\begin{align}
\det \pmat{v' &v'' &X_\alpha \cdot v &X_\alpha \cdot v' &X_\alpha X_\beta \cdot v}
	&= -\frac{15^4}{2} ab \alpha^4 \beta^6 \gamma^2 (c^2 + s^2)^6\\[1ex]
	&= -\frac{50625}{2} ab \alpha^4 \beta^6 \gamma^2.
\end{align}
Since $a, b, \alpha, \beta, \gamma \neq 0$, this is simply a nonzero constant.
By \Cref{prop:partials}, this proves \Cref{thm:main}.

\begin{rem}
In the notation of \S\ref{subsec:rep-coords}, our choices for $v_2$ and $v_3$ are
\begin{align}
v_2(z) &= a\sf{v}_0^2 + \alpha ((\cos z) \sf{u}_2^2 + (\sin z) \sf{v}_2^2),\\
v_3(z) &= b\sf{v}_0^2 + \beta ((\cos z) \sf{u}_1^3 + (\sin z) \sf{v}_1^3).
\end{align}
There are other choices that would suffice for \Cref{thm:main}, replacing the pair of subscripts $2$ and $1$ by another pair of distinct subscripts $j_2 \in \{1,2\}$ and $j_3 \in \{1,2,3\}$.
In this generality, one checks that
\begin{align}\label{eq:x-3}
X_3 v_2 = j_2 v'_2,\qquad 
X_3 v_3 &= j_3 v'_3.
\end{align} 
These identities show why $j_2$ and $j_3$ must be distinct.
For if they are the same integer $j$, then $X_\alpha v' = j X_\alpha X_3 v$ for each $\alpha$, violating linear independence.

Let us give a conceptual explanation of \eqref{eq:x-3}.
Let $\iota \colon C \to \SO(3)$ be the homomorphism $\iota(z) = \exp(z \xi_3)$.
In the notation of \Cref{ex:torus}, we see that for any integers $\ell \geq k \geq 1$, the vectors $\sf{u}_k^\ell$ and $\sf{v}_k^\ell$ span an irreducible summand of 
\begin{align}
\rho_\ell \circ \iota \colon C \to \SO(3) \to \GL(V_\ell)
\end{align}
of weight $k$.
Therefore, $X_3 = d\rho_\ell(\xi_3)$ acts on $\langle\sf{u}_k^\ell, \sf{v}_k^\ell\rangle$ by $kJ$, where $J$ is defined as in \eqref{eq:de-leo-x}.
Meanwhile, from \S\ref{subsec:rep-coords}, we know that $X_3$ sends $\sf{v}_0^\ell$ to zero.
Taking $(\ell, k) = (2, j_2), (3, j_3)$, we arrive at \eqref{eq:x-3}.
\end{rem}

\begin{rem}
The calculation of the $14 \times 14$ osculating determinant can be simplified through several observations.
First, it immediately decomposes as the product of a $12 \times 12$ determinant and the $2 \times 2$ determinant 
\begin{align} 
\det \pmat{-\gamma s &-\gamma c\\ \gamma c &-\gamma s} = \gamma^2(c^2 + s^2) = \gamma^2.
\end{align}
The $12 \times 12$ determinant involves rows $11$ and $12$ of the original $14 \times 14$ matrix.
These rows only contain nonzero entries in columns $9$, $10$, and $12$.
So the $12 \times 12$ determinant becomes a sum of three $10 \times 10$ determinants.
Finally, the expansions of the latter involve many $2 \times 2$ determinants that simplify to scalar multiples of $c^2 + s^2 = 1$.

This does not completely explain why the osculating determinant is constant, as the original matrix includes several entries that are linear in $\sin z$ and survive into the $10\times 10$ determinants.
It is remarkable that all of these extra copies of $\sin z$ cancel in the end.
It would be pleasant to have a purely conceptual explanation for the constancy of the determinant.
\end{rem}


\bibliographystyle{alphaurl}
\bibliography{free}

\end{document}